\numberwithin{equation}{section}
\def\blfootnote{\xdef\@thefnmark{}\@footnotetext}
\theoremstyle{plain}
\newtheorem{theorem}{Theorem}[section]
\newtheorem{proposition}[theorem]{Proposition}
\newtheorem{cor}[theorem]{Corollary}
\newtheorem{lemma}[theorem]{Lemma}
\newtheorem{definition}[theorem]{Definition}
\newcommand*{\defeq}{\mathrel{\vcenter{\baselineskip0.5ex \lineskiplimit0pt
 \hbox{\scriptsize.}\hbox{\scriptsize.}}}%
 =}
\newtheorem*{remark}{Remark}%[section]
\newtheorem*{remarks}{Remarks}%[section]
\theoremstyle{remark}
\newcommand{\C}{{\mathbb{C}}}
\newcommand{\D}{{\mathbb{D}}}
\newcommand{\N}{{\mathbb{N}}}
\newcommand{\M}{{\mathcal{M}}}
\newcommand{\tef}{transcendental entire function}
\newcommand{\qfor}{\quad\text{for }}
\newcommand{\Rea}{\operatorname{Re }}
\begin{document}
\title[The maximum modulus set of a polynomial]{The maximum modulus set of a polynomial}
\author[{L. Pardo-Sim\'on \and D. J. Sixsmith}]{Leticia Pardo-Sim\'on \and David J. Sixsmith}
\address{Institute of Mathematics of the Polish Academy of Sciences\\ ul. \'Sniadeckich~8\\
00-656 Warsaw\\ Poland\textsc{\newline \indent \href{https://orcid.org/0000-0003-4039-5556}{\includegraphics[width=1em,height=1em]{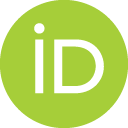} {\normalfont https://orcid.org/0000-0003-4039-5556}}}
}
\email{l.pardo-simon@impan.pl}
\address{School of Mathematics and Statistics\\ The Open University\\
Milton Keynes MK7 6AA\\ UK\textsc{\newline \indent \href{https://orcid.org/0000-0002-3543-6969}{\includegraphics[width=1em,height=1em]{orcid2.png} {\normalfont https://orcid.org/0000-0002-3543-6969}}}}
\email{david.sixsmith@open.ac.uk}
\thanks{2010 Mathematics Subject Classification. Primary 30D15.\vspace{3pt}\\ Key words: polynomials, maximum modulus.\vspace{3pt}\\ }
\begin{abstract}
We study the maximum modulus set, $\M(p)$, of a polynomial $p$. We are interested in constructing $p$ so that $\M(p)$ has certain exceptional features. Jassim and London gave a cubic polynomial $p$ such that $\M(p)$ has one discontinuity, and Tyler found a quintic polynomial $\tilde{p}$ such that $\M(\tilde{p})$ has one singleton component. These are the only results of this type, and we strengthen them considerably. In particular, given a finite sequence $a_1, a_2, \ldots, a_n$ of distinct positive real numbers, we construct polynomials $p$ and $\tilde{p}$ such that $\M(p)$ has discontinuities of modulus $a_1, a_2, \ldots, a_n$, and $\M(\tilde{p})$ has singleton components at the points $a_1, a_2, \ldots, a_n$.

Finally we show that these results are strong, in the sense that it is not possible for a polynomial to have infinitely many discontinuities in its maximum modulus set.
%
%We completely characterise this set of $p$ outside of a disc containing the origin. We use this result to show that the set of \emph{exceptional values} (in the sense of Blumenthal) is bounded. We also use this result to strengthen a result of Hayman regarding the maximum modulus set of a polynomial near the origin.
%
\end{abstract}
\maketitle
\section{Introduction}
Let $f$ be an entire function, and define the \emph{maximum modulus} by
\[
M(r, f) \defeq \max_{|z| = r} |f(z)|, \qfor r \geq 0.
\]
Following \cite{Sixsmithmax}, denote by $\M(f)$ the set of points where $f$ achieves its maximum modulus; we call this the \emph{maximum modulus set}. In other words
\begin{equation*}
%\label{Mdef}
\M(f) \defeq \{ z \in \C \colon |f(z)| = M(|z|,f) \}.
\end{equation*} 

If $f$ is a monomial, then $\M(f) = \C$; clearly this case is not interesting. Otherwise, $\M(f)$ consists of a countable union of closed \emph{maximum curves}, which are analytic except at their endpoints, and may or may not be unbounded; \cite{Blumenthal}. It is straightforward to check that the maximum modulus set is closed.

Our interest in this paper is in the case that $f$ is a polynomial. In particular, we study two ``exceptional'' features in the maximum modulus set.
%
%Many authors have studied the maximum modulus set of a polynomial $p$ \emph{near} the origin; see, for example, X Y AND Z. However, there seems to be no study of maximum modulus set \emph{far} from the origin. Our first goal in this paper is to give a complete characterisation of $\M(p)$ outside of a disc containing the origin.
%
The first concerns discontinuities, which we define as follows. 
\begin{definition}\normalfont 
Let $f$ be an entire function, and $r > 0$. If there exists a connected component $\Gamma$ of $\M(f)$ such that $\min \{ |z| : z \in \Gamma \} = r$, then we say that $\M(f)$ has a \emph{discontinuity of modulus} $r$. Note that a maximum modulus set may have more than one discontinuity of the same modulus.
\end{definition}

%\begin{remark}\normalfont
%Note that a maximum modulus set may have more than one discontinuity of the same modulus. For example, if $f$ is an entire function, and $\M(f)$ has a discontinuity of modulus $r$, then $\M(\tilde{f})$ has two discontinuities of modulus $r$, where $\tilde{f}(z) \defeq f(z^2/r)$.
%\end{remark}

%Such discontinuities were first defined by Blumenthal \cite{Blumenthal}, who called them \emph{exceptional values of the second kind}. (Blumenthal used the term \emph{exceptional value of the first kind} for a point where two maximal curves meet.) However, he did not give any examples of an entire function whose maximum modulus set has such discontinuities, although he conjectured that there is a cubic polynomial with this property.
These discontinuities were first studied by Blumenthal \cite{Blumenthal}, see also \cite{retrospect}. Hardy \cite{hardy1909} was the first to  give an entire function with discontinuities in its maximum modulus set; in fact he constructed a {\tef} whose maximum modulus set has infinitely many discontinuities. Such discontinuities were studied further in \cite{letidave}, where it was shown that, given a sequence $(a_k)_{k \in \N}$ of strictly positive real numbers tending to infinity, there is a {\tef} whose maximum modulus set has discontinuities of modulus $a_k$ for each $k \in \N$. 

Blumenthal did not give any examples of a polynomial whose maximum modulus set has discontinuities, although he conjectured that there is a cubic polynomial with this property. Such a polynomial was given in \cite{jassimlondon}. Remarkably, this is the only such example in the literature, and seems to have only one discontinuity. Our first result is a significant generalisation of that in \cite{jassimlondon}, and complements the main result in \cite{letidave} mentioned above.

\begin{theorem}
\label{t1}
Suppose that $a_1, a_2, \ldots, a_n$ is a finite sequence of distinct positive real numbers. Then there exists a polynomial $p$, of degree $2n+1$, such that $\M(p)$ has discontinuities of modulus $a_1, a_2, \ldots, a_n$.
\end{theorem}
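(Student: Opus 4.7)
My plan is to proceed by induction on $n$, with the base case $n=1$ handled by a real rescaling of the Jassim--London cubic from~\cite{jassimlondon}. Throughout I would work with polynomials $p$ having real coefficients, so that $\M(p)$ is symmetric under $z\mapsto\overline{z}$ and, for every fixed $r>0$, the function $\theta\mapsto |p(re^{i\theta})|^2$ is an even trigonometric polynomial of degree $\deg p$. A discontinuity of modulus $r_0$ in $\M(p)$ will then arise whenever two local maxima in $\theta$ swap the role of global maximum as $r$ crosses $r_0$: the component of $\M(p)$ containing the peak that only becomes dominant for $r\geq r_0$ has minimum modulus exactly $r_0$.

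For the inductive step, suppose I have built a polynomial $q$ of degree $2n-1$ whose maximum modulus set has discontinuities at some radii $a_1'<\cdots<a_{n-1}'$ (to be matched with the prescribed values $a_1,\ldots,a_{n-1}$ after a final rescaling). I will form
\[
p(z)\defeq q(z)+\alpha z^{2n}+\beta z^{2n+1},
\]
and choose real $\alpha,\beta$ so that (i) the behaviour of $|q(re^{i\theta})|^2$ near each $a_j'$, $1\leq j<n$, is perturbed only negligibly, so the $n-1$ existing discontinuities survive; and (ii) at the largest prescribed radius $a_n$, a new local maximum of $|p(re^{i\theta})|$ at some angle $\theta_n$ (a natural choice being $\theta_n=\pi$) overtakes the previous global maximum, producing a fresh component of $\M(p)$ whose smallest modulus is $a_n$. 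To arrange (i) and (ii) simultaneously, I would first rescale $q$ so that $a_{n-1}'\ll a_n$, and then take $|\alpha|,|\beta|$ comparable to $a_n^{-(2n+1)}$. The new monomials then contribute $O(r^{2n})+O(r^{2n+1})$ to $|p(re^{i\theta})|^2$, which is vanishingly small at $r\leq a_{n-1}'$ but of the correct order at $r=a_n$. Careful selection of the signs of $\alpha$ and $\beta$ should position the new peak at $\theta_n$ and force the crossover to occur precisely at $r=a_n$.

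The main obstacle, as I see it, is the non-local character of the polynomial modulus: any new coefficient perturbs $|p|^2$ at \emph{every} radius simultaneously. The technical crux is therefore to establish that the estimates in step (i) hold uniformly on a full neighbourhood of each $a_j'$, neither destroying the existing discontinuities nor creating any spurious new ones. After an explicit computation I would expect this to reduce to quantitative lower bounds on the gap between the global maximum and the subdominant local maxima of $|q(re^{i\theta})|$ on small annuli about each $a_j'$; such bounds should be accessible from the inductive hypothesis together with compactness, perhaps at the cost of further shrinking $|\alpha|$ and $|\beta|$. The degree count $2n+1$ is automatic from the construction, and a final real rescaling $z\mapsto\lambda z$ matches the discontinuities with the prescribed values $a_1,\ldots,a_n$.
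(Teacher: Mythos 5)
Your strategy has two genuine gaps, and the first is fatal to the statement as posed. The theorem prescribes the moduli $a_1,\ldots,a_n$ \emph{in advance}, and these may be arbitrarily close together (say $1, 1.1, 1.2,\ldots$). Your inductive step requires rescaling $q$ so that $a_{n-1}'\ll a_n$ before adding $\alpha z^{2n}+\beta z^{2n+1}$ with $|\alpha|,|\beta|\sim a_n^{-(2n+1)}$; this is exactly what makes the perturbation negligible at the old radii, so the construction inherently produces discontinuity radii with large ratios between consecutive values. A single final rescaling $z\mapsto\lambda z$ has one degree of freedom and preserves all ratios, so it can never map the radii your induction produces onto an arbitrary prescribed set. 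At best your argument yields a degree $2n+1$ polynomial with $n$ discontinuities at \emph{some} radii, which is a strictly weaker statement.

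The second gap is that even the claim ``the $n-1$ existing discontinuities survive'' at the \emph{same} moduli is unjustified. A discontinuity of modulus $a_j'$ comes from a crossover of two competing local maxima at exactly $r=a_j'$; a generic perturbation of the coefficients, however small, moves that crossover radius, so the discontinuity survives only at some nearby modulus, not at $a_j'$. You would need a mechanism that pins the crossover radii exactly, and your proposal offers none. This is precisely what the paper's construction supplies: it takes $p(z)\defeq a(z^2+1)+\hat{p}(z)$ with $\hat{p}(z)\defeq z(z^2-a_1^2)\cdots(z^2-a_n^2)$ odd with real coefficients, proves (for $a$ large) that $\M(p)$ meets each circle $|z|=r$ in the relevant annulus only on the real axis, and then the competition is exactly between $|p(r)|$ and $|p(-r)| = a(r^2+1)\mp\hat p(r)$, so $\M(p)$ jumps between the positive and negative real axis precisely where $\hat{p}(r)$ changes sign, i.e.\ exactly at the prescribed $a_k$, with no separation hypothesis on the $a_k$ and no induction. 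If you want to salvage your approach you would need to replace the perturbative induction by some such symmetry or algebraic device that locks the crossover radii to the prescribed values.
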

It is possible for some of the analytic curves that make up the maximum modulus set to be degenerate; in other words, to be singletons. The only examples of this behaviour are due to Tyler \cite{tyler}, who gave a {\tef} $f$ and a polynomial $p$ such that $\M(f)$ has infinitely many singleton components, and $\M(p)$ has a singleton component. We show that it is possible to significantly strengthen this polynomial case.
\begin{theorem}
\label{t2}
Suppose that $a_1, a_2, \ldots, a_n$ is a finite sequence of distinct positive real numbers. Then there exists a polynomial $p$, of degree $4n+1$, such that $\M(p)$ has singleton components at the points $a_1, a_2, \ldots, a_n$.
\end{theorem}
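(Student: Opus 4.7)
The approach I would take is to seek a polynomial $p$ whose modulus $|p(re^{i\theta})|$, viewed as a function of $\theta$ for each fixed $r$, has a local maximum at $\theta=0$ that rises to become the \emph{global} maximum exactly at $r=a_k$ and dips back below the global maximum for all nearby $r\neq a_k$; moreover, for such $r$ the true global maximum should lie at an angle bounded away from $0$. This makes $a_k$ an isolated point of $\M(p)$, i.e., a singleton component.

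A natural candidate, by analogy with Theorem~\ref{t1}, is a polynomial of the form
\[
p(z) \;=\; z \prod_{k=1}^{n} \bigl(z^{2}-c_k\bigr)^{2},
\]
which has degree $4n+1$ as required, is real on the real axis, and satisfies $p(-z)=-p(z)$; Tyler's quintic corresponds to the base case $n=1$. The constants $c_1<c_2<\cdots<c_n$ are to be chosen as functions of $a_1,\ldots,a_n$ so that the singleton behaviour described above appears exactly on each circle $|z|=a_k$, with the singleton lying on the positive real axis by virtue of the built-in symmetry.

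I would proceed by induction on $n$. For the inductive step, one introduces the extra squared factor at level $n$ and then readjusts the parameters slightly so as to produce a new singleton at $|z|=a_n$ while preserving each of the singletons already present at $|z|=a_1,\ldots,|z|=a_{n-1}$. This amounts to a continuity/perturbation argument tracking, as a function of $r$ and of the coefficients of $p$, the locations and values of the finitely many critical points of $\theta \mapsto |p(re^{i\theta})|^{2}$ competing for the global maximum.

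The key difficulty is verifying that what is produced at each $a_k$ is genuinely a singleton, rather than a discontinuity with an attached maximum curve (as in Theorem~\ref{t1}) or a full maximum arc. Concretely, one needs the two competing critical values of $\theta \mapsto |p(re^{i\theta})|$ on either side of $\theta=0$ to strictly exceed the value at $\theta=0$ for every $r\neq a_k$ in some neighbourhood of $a_k$, with equality holding only at the single pair $(r,\theta)=(a_k,0)$. This is a delicate critical-point analysis, and the presence of the \emph{squared} factors (rather than the simple factors from Theorem~\ref{t1}) should be what produces the correct higher-order tangency of the competing maxima, and hence a singleton rather than a mere one-sided discontinuity.
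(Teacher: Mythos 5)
There is a genuine gap: your candidate family cannot produce singleton components, and the key mechanism of the paper is missing. For $p(z)=z\prod_{k=1}^{n}(z^{2}-c_k)^{2}$ with $c_k>0$, Lemma~\ref{lem1} (or a direct computation) gives $|p(re^{i\theta})| = r\prod_k \left|r^{2}e^{2i\theta}-c_k\right|^{2}$, and each factor $\left|r^{2}e^{2i\theta}-c_k\right|^{2}=r^{4}+c_k^{2}-2r^{2}c_k\cos 2\theta$ is maximised at $\theta=\pm\pi/2$ simultaneously for every $k$ and every $r$. Hence $\M(p)$ is exactly the imaginary axis, with no singleton components, for any choice of the $c_k$; in particular the base case you attribute to Tyler is not of this bare-product form. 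What the paper does instead is take $p(z)=a(z^{2}+1)+\hat{p}(z)$ with $\hat{p}(z)=-z\prod_k(z^{2}-a_k^{2})^{2}$ odd with real coefficients: for $a$ large the dominant even term forces every point of $\M(p)$ with modulus in a fixed annulus $[R,R']$ onto the real axis (Lemma~\ref{lem2}), and then, since $p(\pm r)=a(r^{2}+1)\pm\hat{p}(r)$, the sign of $\hat{p}(r)$ decides whether $\M(p)$ meets the circle $|z|=r$ in $\{r\}$, $\{-r\}$, or $\{-r,r\}$ (Lemma~\ref{lem3}). Because $\hat{p}<0$ on $(0,\infty)$ except for the zeros at the $a_k$, the point $+a_k$ joins $\M(p)$ only at the exact radius $a_k$, giving the singletons. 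Your proposal has no analogue of this even dominant term, which is the crux of the construction.

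The inductive perturbation scheme is also not a proof as it stands. A singleton component arises from an exact tie between two competing maxima on the same circle (here $|p(a_k)|=|p(-a_k)|$, forced structurally by the oddness of $\hat{p}$ together with $\hat{p}(a_k)=0$), and such ties are destroyed by generic small changes of the coefficients; so ``readjusting the parameters slightly'' to create a new singleton while preserving the old ones requires an exact symmetry or algebraic identity, which you do not supply. You correctly identify the degree $4n+1$, the role of squared factors in producing the tangency, and the desired local picture near each $a_k$, but the argument that would verify this picture — pinning the maximum to the real axis and reducing the competition to the sign of an odd polynomial — is absent, and the paper's direct, non-inductive argument is what fills precisely that hole.
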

\begin{remarks}\normalfont
\mbox{ }
\begin{enumerate}
\item Unlike in \cite{letidave}, where the construction required complicated and delicate approximations, our results here are direct and elementary.
\item Note that we are not claiming in Theorems~\ref{t1} and \ref{t2} that there might not be additional discontinuities and/or singleton components in the maximum modulus sets; see Figure~\ref{fig1} which indicates that this indeed may happen.
\item Note also that singleton components of $\M(f)$ are always discontinuities in the sense we have defined them. Thus the conclusion of Theorem~\ref{t1} is already contained in that of Theorem~\ref{t2}. However we have retained Theorem~\ref{t1}, partly for reasons of historical interest, and partly because the degree of the polynomials is less in Theorem~\ref{t1} than in Theorem~\ref{t2}. 
\item It is natural to ask if these results can be achieved with polynomials of smaller degree. This does not seem possible with the techniques of this paper.
\end{enumerate}
\end{remarks}
Finally, we show that these constructions are strong, in the sense that a polynomial can have at most finitely many discontinuities in its maximum modulus set.
\begin{theorem}
\label{t3}
Suppose that $p$ is a polynomial. Then $\M(p)$ has at most finitely many discontinuities.
\end{theorem}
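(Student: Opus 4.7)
My plan is to reduce Theorem~\ref{t3} to the finiteness of the number of connected components of $\M(p)$, and then to derive the latter from the semi-algebraic nature of $\M(p)$. For the reduction, observe that each discontinuity of modulus $r > 0$ is witnessed by a connected component $\Gamma$ of $\M(p)$ with $\min_{z \in \Gamma}|z| = r$ (the minimum is attained since $\Gamma$ is non-empty and closed in $\C$, so a sequence realising the infimum of $|z|$ is bounded and one can extract a convergent subsequence with limit in $\Gamma$), and that, since the definition explicitly permits two distinct discontinuities to have the same modulus, such distinct discontinuities must in fact be associated to distinct components. Hence the number of discontinuities of $\M(p)$ is at most the number of connected components of $\M(p)$.

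For the main step, I would write $P(x, y) := |p(x + iy)|^2 \in \R[x, y]$ and note that
\[
\M(p) \;=\; \bigl\{(x, y) \in \R^2 \;:\; \forall (u, v) \in \R^2,\ u^2 + v^2 = x^2 + y^2 \;\Longrightarrow\; P(u, v) \leq P(x, y)\bigr\},
\]
which is a first-order definable subset of $\R^2$ over the ordered field of real numbers, with polynomial atomic predicates. By the Tarski--Seidenberg theorem, $\M(p)$ is therefore semi-algebraic, and since every semi-algebraic subset of $\R^n$ has only finitely many connected components, the proof is complete.

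If one wished to avoid the Tarski--Seidenberg machinery, an alternative route would be to observe directly that $\M(p)$ is contained in the real algebraic curve $C := \{z \in \C : \operatorname{Im}(z p'(z) \overline{p(z)}) = 0\}$ and to argue that the minimum-modulus point of each component of $\M(p)$ lies in the union of the singular locus of $C$, the critical points of $|z|^2$ restricted to the smooth part of $C$, and the ``value-crossing'' points at which two smooth branches of $C$ become tied in $|p|$-value. I would expect the value-crossing points to be the main obstacle in this direct approach; they can be controlled by a B\'ezout-type argument applied to resultants in $r$, which in turn relies on showing that $\operatorname{Im}(z p'(z) \overline{p(z)})$ and its angular derivative share no irreducible factor, and ultimately on the observation that $|p|$ cannot be constant on a circle unless $p$ is a monomial.
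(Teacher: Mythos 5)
Your argument is correct, but it takes a genuinely different route from the paper. The paper relies on Blumenthal's structure theorem (Theorem~\ref{thm_maxcurves}), that for an entire function $f$ and compact $S$ the set $\M(f)\cap S$ consists of finitely many closed curves, to deduce (Corollary~\ref{cor_bounded_dis}) that $\M(p)\cap\overline{\D}$ has finitely many discontinuities; it then handles the unbounded part via the reciprocal polynomial $q(z)=z^np(1/z)$ (Proposition~\ref{prop_reciprocal}), so that applying the compact-set statement to $\M(q)\cap\overline{\D}$ controls $\M(p)\setminus\D$. You instead obtain global finiteness of the number of connected components in a single step: since $|p(x+iy)|^2$ is a real polynomial and the defining condition of $\M(p)$ is a first-order formula over the real ordered field, Tarski--Seidenberg makes $\M(p)$ semi-algebraic, hence with finitely many components; combined with your reduction of discontinuities to components (which is sound --- the minimum of $|z|$ on a component is attained because components of the closed set $\M(p)$ are closed, and the count of discontinuities is exactly the count of components not containing $0$, precisely the identification made in Corollary~\ref{cor_bounded_dis}), this proves the theorem. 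Your route avoids both the classical analytic machinery and the reciprocal trick, works on all of $\C$ at once, and is in principle quantitative (the number of components of a semi-algebraic set can be bounded in terms of $\deg p$); on the other hand it is intrinsically restricted to polynomials, whereas the paper's compact-set statement holds for all entire functions and yields the further remark that a transcendental entire function has at most countably many discontinuities. Your sketched alternative via the curve $\operatorname{Im}\bigl(zp'(z)\overline{p(z)}\bigr)=0$ is closer in spirit to Blumenthal's original analysis, but it is only a sketch and is not needed, since your main argument is already complete.
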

%\begin{remark}\normalfont
%Note that an immediate corollary of Theorem~\ref{t3} is that a {\tef} can have only countably many discontinuities in its maximum modulus set.
%\end{remark}
%
%\subsection*{Structure}
%The structure of the paper is as follows. 
%First, in Section~\ref{S.basic}, we discuss an alternative approach to proving Hardy's result. That is, we construct a transcendental entire function whose maximum modulus set has infinitely many discontinuities. Although this does not add anything essentially new, it illustrates the ideas used in our more complicated construction.  In Section~\ref{S.construction} we lay out the technique behind the construction we shall use in our proofs. The proofs of Theorem~\ref{theo:main} and Theorem~\ref{theo:finite order} follow in the final two sections.

%\subsection*{Notation}
%We let $\Hclosed$ denote the closed right half-plane $\Hclosed \defeq \{ z \in \C \colon \Rea z \geq 0 \}$, and, 
%For $a \in \C$ and $r > 0$, we define the ball $D(a, r) \defeq \{ z \in \C \colon |z-a| < r \}$. 
%For a hyperbolic domain $V \subset \C$, we denote the hyperbolic distance in $V$ between two points $z, w \in V$ by $d_V(z, w)$. If $A, B, C, D \subset \C$, then we say that $A$ \emph{separates $B$ from $C$ in $D$} if $B$ and $C$ lie in different components of $D \setminus A$. We say that $A$ \emph{separates $B$ from $\infty$ in $D$} if $B$ lies in a bounded component of $D \setminus A$.

\subsection*{Acknowledgments}
We would like to thank Peter Strulo for programming assistance leading to Figure~\ref{fig1}.
%
%%%%%
%
%%%%%
%
\section{Proofs of Theorem~\ref{t1} and Theorem~\ref{t2}}
\label{S.main}
We require a few lemmas before proving our main results. The first is well-known, and we omit the proof. 
\begin{lemma}
\label{lem1}
If $q(z) \defeq \sum^n_{k=0}a_k z^k$ is a polynomial, then 
\begin{equation}
\label{eq_modp}
\left| q(r e^{i\theta})\right|^2= \sum_{k=0 }^n \vert a_k\vert^2r^{2k} +  \sum_{0 \leq j < k \mathop \le n} 2 \vert a_j\vert \vert a_k\vert r^{j+k} \cos((j-k)\theta+\arg(a_j)-\arg(a_k)).
\end{equation}
\end{lemma}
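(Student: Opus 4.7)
The plan is to compute $|q(re^{i\theta})|^2$ directly by writing each coefficient in polar form and expanding $q \cdot \overline{q}$. Specifically, writing $a_k = |a_k| e^{i \arg(a_k)}$, one has
\[
q(re^{i\theta}) = \sum_{k=0}^n |a_k|\, r^k\, e^{i(k\theta + \arg(a_k))},
\]
and therefore
\[
|q(re^{i\theta})|^2 = q(re^{i\theta})\, \overline{q(re^{i\theta})} = \sum_{j=0}^n \sum_{k=0}^n |a_j||a_k|\, r^{j+k}\, e^{i((k-j)\theta + \arg(a_k) - \arg(a_j))}.
\]

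Next I would split this double sum into diagonal and off-diagonal parts. The diagonal terms, where $j = k$, have zero phase and so contribute $\sum_{k=0}^n |a_k|^2 r^{2k}$, matching the first sum in \eqref{eq_modp}. For the off-diagonal terms, I would pair up each $(j,k)$ with $j<k$ together with its ``conjugate'' index $(k,j)$; the sum of the two corresponding exponentials collapses by Euler's formula $e^{i\varphi} + e^{-i\varphi} = 2\cos\varphi$ to
\[
2|a_j||a_k|\, r^{j+k}\, \cos\bigl((k-j)\theta + \arg(a_k) - \arg(a_j)\bigr).
\]
Finally, using that cosine is even, this equals $2|a_j||a_k| r^{j+k} \cos((j-k)\theta + \arg(a_j) - \arg(a_k))$, yielding the second sum in \eqref{eq_modp}.

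There is no genuine obstacle here: the argument is purely a bookkeeping computation, which is presumably why the authors describe it as well-known and omit the details. The only point that requires a line of justification is the parity of cosine used to reconcile the sign conventions in the stated formula.
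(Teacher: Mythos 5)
Your computation is correct: expanding $|q(re^{i\theta})|^2 = q\,\overline{q}$ with the coefficients in polar form, separating the diagonal terms, pairing $(j,k)$ with $(k,j)$ via $e^{i\varphi}+e^{-i\varphi}=2\cos\varphi$, and invoking the evenness of cosine gives exactly \eqref{eq_modp}. The paper omits the proof precisely because this is the standard, well-known verification, so your argument matches the intended one.
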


\begin{figure}[htb]
    \centering
    \begin{subfigure}[b]{\textwidth}
        \centering
        \includegraphics[width=0.45\linewidth]{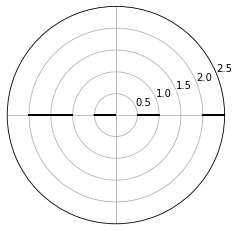}%
        \hfill
        \includegraphics[width=0.45\linewidth]{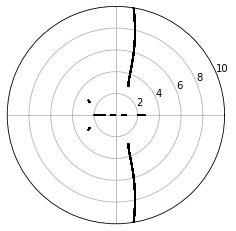}
        $p(z) \defeq 1000(z^2 + 1) +z(z^2 - 0.25)(z^2 -1)(z^2 -4).$
    \end{subfigure}
    \vskip\baselineskip
    \begin{subfigure}[b]{\textwidth}
        \centering
        \includegraphics[width=0.45\linewidth]{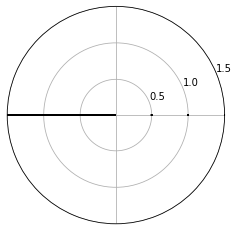}%
        \hfill
        \includegraphics[width=0.45\linewidth]{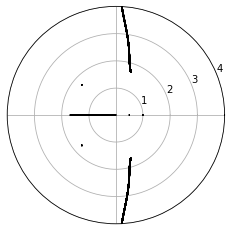}
        $\tilde{p}(z) \defeq 100(z^2 + 1)-z(z^2 - 0.25)^2(z^2 -1)^2.$
    \end{subfigure}
    \caption{\label{fig1}Computer generated graphics of $\M(p)$ and $\M(\tilde{p})$, where the former has discontinuities of modulus $0.5$, $1$ and $2$, as in Theorem~\ref{t1}, and the latter has singleton components at $0.5$ and $1$ as in Theorem~\ref{t2}. Note on the right that, when zoomed out, there appear to be additional discontinuities in these maximum modulus sets.}
\end{figure}

\iffalse
\begin{figure}
 \subfloat[$\M(p)$ where $p(z) \defeq 1000(z^2 + 1) +z(z^2 - 0.25)(z^2 -1)(z^2 -4)$]{\includegraphics[width=.45\textwidth]{poly2zoomedin.png}}\hfill
  \subfloat[$\M(p)$ using a larger scale]{\includegraphics[width=.45\textwidth]{poly2zoomedout.png}}\hfill
	\subfloat[$\M(\tilde{p})$ where $\tilde{p}(z) \defeq 100(z^2 + 1)-z((z^2 - 0.25)^2)((z^2 -1)^2)$]{\includegraphics[width=.45\textwidth]{poly4zoomedin.png}}\hfill
  \subfloat[$\M(\tilde{p})$ using a larger scale]{\includegraphics[width=.45\textwidth]{poly4zoomedout.png}}
   \caption{\label{fig1} Graphics of $\M(p)$ and $\M(\tilde{p})$, where the former has discontinuities as in Theorem~\ref{t1}, and the latter has singleton components as in Theorem~\ref{t2}. Note on the right that, when zoomed out, there appear to be additional discontinuities.}
 \end{figure}
\fi
We use Lemma~\ref{lem1} to prove the following. Roughly speaking, this result states that we can force part of the maximum modulus set of a certain class of polynomials to lie on the real line. This result is the crux of our construction.
\begin{lemma}
\label{lem2}
Suppose that $\hat{p}$ is a polynomial with only real coefficients, and that $0 < R < R'$. For $a > 0$, set
\begin{equation}
\label{eq:pdef}
p(z) \defeq a(z^2 + 1) + \hat{p}(z).
\end{equation}
If $a$ is sufficiently large, then
\[
z \in \M(p) \text{ and } R \leq |z| \leq R' \implies \operatorname{Im} z = 0.
\] 
\end{lemma}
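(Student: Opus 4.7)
The plan is to exploit that for large $a$, the polynomial $p$ is a small perturbation of $a(z^2+1)$, whose maximum modulus set is exactly the real axis (since $|z^2+1|$ is maximised on each circle $|z|=r$ at $z=\pm r$). The claim will follow from two ingredients: (i) showing that the supremum of $|p(re^{i\theta})|^2$ over $\theta$ is attained close to $\{0, \pi\}$ for large $a$, and (ii) forcing the attainment to be exactly at $\{0, \pi\}$ by combining the real-coefficient symmetry of $p$ with strict concavity.

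To set this up, fix $r \in [R, R']$ and write $g_r(\theta) \defeq |p(re^{i\theta})|^2$. Expanding $|p|^2 = |a(z^2+1)|^2 + 2\operatorname{Re}\bigl(a(z^2+1)\overline{\hat p(z)}\bigr) + |\hat p(z)|^2$ yields
\[
g_r(\theta) = a^2\bigl(r^4 + 2r^2\cos(2\theta) + 1\bigr) + a\, h_1(r,\theta) + h_0(r,\theta),
\]
where $h_0, h_1$ together with their $\theta$-derivatives up to order two are bounded, uniformly in $(r,\theta) \in [R, R'] \times \R$, by some constant $C = C(\hat p, R, R')$. Now fix any $\delta \in (0, \pi/4)$, say $\delta = \pi/8$. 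First, for $\theta \in [\delta, \pi-\delta] \cup [\pi+\delta, 2\pi-\delta]$, where $\cos(2\theta) \leq \cos(2\delta) < 1$,
\[
g_r(\theta) - g_r(0) \leq -2a^2 r^2(1-\cos(2\delta)) + 2Ca,
\]
which is strictly negative, uniformly in $r \in [R, R']$, once $a > C/(R^2(1-\cos(2\delta)))$. Thus any maximum of $g_r$ must lie in $(-\delta, \delta) \cup (\pi-\delta, \pi+\delta)$. Second, on each of these intervals $\cos(2\theta) \geq \cos(2\delta) > 0$, and a further application of the same expansion gives
\[
g_r''(\theta) = -8a^2 r^2 \cos(2\theta) + O(a) \leq -8a^2 R^2 \cos(2\delta) + 2Ca < 0
\]
for $a$ sufficiently large. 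Hence $g_r$ is strictly concave on each of these intervals.

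Finally, since $\hat p$ (and hence $p$) has real coefficients, $p(\bar z) = \overline{p(z)}$, so $g_r(-\theta) = g_r(\theta)$, and therefore $g_r'(0) = g_r'(\pi) = 0$. Combined with strict concavity, this pins the unique maxima of $g_r$ on $(-\delta, \delta)$ and $(\pi-\delta, \pi+\delta)$ to $\theta = 0$ and $\theta = \pi$ respectively. Hence the global maximum of $g_r$ is realised only at $\theta \in \{0, \pi\}$, i.e.\ at real $z$, giving the lemma. The only real obstacle is ensuring the threshold for $a$ above which the argument runs can be chosen independently of $r \in [R, R']$; this is automatic from compactness, which makes all the bounds on $h_0, h_1$ and their derivatives uniform in $r$.
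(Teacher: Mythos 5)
Your proof is correct and follows essentially the same route as the paper: both treat $p$ as a perturbation of $a(z^2+1)$, first localising any maximiser of $\theta \mapsto |p(re^{i\theta})|^2$ near the real axis by comparing with the value at $\theta=0$ (resp.\ $\pi$), and then pinning it exactly to $\theta \in \{0,\pi\}$ via the vanishing of the $\theta$-derivative there (a consequence of the real coefficients) together with a negative second derivative, uniformly in $r \in [R,R']$. The only cosmetic differences are that the paper reduces to the right half-plane by the $z \mapsto -z$ symmetry and phrases the first step via the triangle inequality rather than the $|p|^2$ expansion, and your threshold for $a$ should also absorb the $O(1)$ term coming from $h_0$, which is harmless.
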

\begin{proof}
Let $\hat{p}$, $R$ and $R'$ be as in the statement of the lemma. Note that since $$p(-z) = a(z^2 + 1) + \hat{p}(-z),$$ and $\hat{p}$ is arbitrary, we lose no generality in proving this result only in the right half-plane. In other words, we need to prove that if $a$ is sufficiently large, then
\[
z \in \M(p) \text{ and } \Rea z \geq 0 \text{ and } R \leq |z| \leq R' \implies \operatorname{Im} z = 0.
\] 

Choose $\theta_0 \in (0, \pi/4)$. We begin by showing that if $a > 0$ is sufficiently large, then
\[
z \in \M(p) \text{ and } \Rea z \geq 0 \text{ and } R \leq |z| \leq R' \implies |\arg z| < \theta_0.
\] 

Consider first the polynomial $q(z) \defeq z^2 + 1$. Note that, by Lemma~\ref{lem1},
\[
q(r)^2 - |q(r e^{i\theta})|^2 = 2r^2(1 - \cos 2\theta).
\]
Set $\alpha = 1 - \cos 2\theta_0 > 0$. It follows that if $\theta_0 \leq |\theta| \leq \pi/2$, then
\[
q(r)^2 - |q(r e^{i\theta})|^2 \geq 2r^2\alpha.
\]
We can deduce that, if, in addition, $r \geq R$, then
\[
q(r) - |q(r e^{i\theta})| \geq \frac{r^2\alpha}{r^2 + 1} \geq \frac{R^2 \alpha}{R^2 + 1}.
\]

Let $K \defeq M(R', \hat{p})$. Choose $a > \dfrac{2K(R^2+1)}{\alpha R^2}$. We can deduce that if $r~\in~[R, R']$ and $\theta_0 \leq |\theta| \leq \pi/2$, then
\begin{align*}
p(r) - |p(r e^{i\theta})| &\geq (aq(r) - K) - (a|q(r e^{i\theta})| + K) \\
                          &= a(q(r) - |q(r e^{i\theta})|) - 2K \\
													&> 0,
\end{align*}
which establishes our first claim.

We have shown that if $a > 0$ is large enough, then the point(s) of $\M(p)$ of modulus $r \in [R, R']$ are ``close'' to the real line. It remains to show that, increasing $a$ if necessary, we can ensure that these points are in fact \emph{on} the real line.

Note, by Lemma~\ref{lem1}, that
\[
|p(r e^{i \theta})|^2 = a^2r^4 + a^2 + 2a^2r^2 \cos 2\theta + \beta(\theta),
\]
where $\beta(\theta)$ is a finite sum of terms of the form $b_k \cos k \theta$, where each $k$ is an integer, and the coefficients $b_k$ are all $O(a)$ as $a \rightarrow \infty$. Moreover, the constant in the $O(a)$ terms is independent of $r$ when we restrict ourselves to the $r$ values in the bounded set $[R,R']$. Note finally that these coefficients are positive or negative depending on whether the corresponding coefficients in $\hat{p}$ are positive or negative, though we do not use this fact.

We then have that 
\begin{equation}
\label{eq:pdiv}
\frac{\partial}{\partial \theta} |p(r e^{i \theta})|^2 = -4a^2r^2 \sin 2\theta + \beta'(\theta),
\end{equation}
and
\begin{equation}
\label{eq:pdiv2}
\frac{\partial^2}{\partial \theta^2} |p(r e^{i \theta})|^2 = -8a^2r^2 \cos 2\theta + \beta''(\theta).
\end{equation}
Note that $\beta$, $\beta'$ and $\beta''$ are all $O(a)$ as $a \rightarrow \infty$. 

Now, equation \eqref{eq:pdiv}, together with the form of $\beta$, implies that $\frac{\partial}{\partial \theta} |p(r e^{i \theta})|^2 = 0$ when $\theta = 0$, and also that
\[
\frac{1}{\theta} \frac{\partial}{\partial \theta} |p(r e^{i \theta})|^2 < -4a^2r^2 + O(a), \qfor |\theta| < \theta_0,
\]
as $a \rightarrow \infty$. It follows that, increasing $a$ if necessary, for each $r \in [R, R']$ the value $\theta = 0$ is the only stationary point of the map $\theta \mapsto |p(r e^{i \theta})|$ in the range $|\theta| \leq \theta_0$. %We can deduce similarly that $\theta = \pi$ is the only stationary point of the same map in the range $|\pi - \theta| \leq \theta_0$. 

Moreover, equation \eqref{eq:pdiv2}, together with the form of $\beta$, implies that
\[
\frac{\partial^2}{\partial \theta^2} |p(r e^{i \theta})|^2 < -8a^2r^2 \cos 2\theta_0 + O(a), \qfor |\theta| < \theta_0,
\]
as $a \rightarrow \infty$. Hence,  increasing $a$ one final time if necessary, we can deduce that the stationary point above is a local maximum. The result follows, using our first claim. 
\end{proof}

We use Lemma~\ref{lem2} to deduce the following.
\begin{lemma}
\label{lem3}
Suppose that $\hat{p}$ is an odd polynomial with only real coefficients, and that $0 < R < R'$. For $a > 0$, let $p$ be the polynomial defined in \eqref{eq:pdef}. If $a$ is sufficiently large, then the following holds. Suppose that $R \leq r \leq R'$. Then:
%\begin{enumerate}
%\item If $s(r) > 0$, then $\M(p) \cap \{ z \in \C : |z| = r \} = \{ r \}$.
%\item If $s(r) < 0$, then $\M(p) \cap \{ z \in \C : |z| = r \} = \{ -r \}$.
%\item If $s(r) = 0$, then $\M(p) \cap \{ z \in \C : |z| = r \} = \{ r, -r \}$.
%\end{enumerate}
\begin{equation*}
\M(p)\cap \{ z \in \C : |z| = r \} = \renewcommand{\arraystretch}{1.5}\left\{\begin{array}{@{}l@{\quad}l@{}}
\{ r \} & \text{ if } \hat{p}(r) > 0, \\ 
\{ -r \} & \text{ if } \hat{p}(r) < 0, \\
\{-r,r\}  & \text{ if } \hat{p}(r) = 0.
\end{array}\right.\kern-\nulldelimiterspace
\end{equation*}
\end{lemma}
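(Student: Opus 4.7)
The plan is to apply Lemma~\ref{lem2} to cut the problem down to only the two real points $\pm r$, and then exploit the oddness of $\hat{p}$ together with the evenness of $a(z^2+1)$ to compare $|p(r)|$ and $|p(-r)|$ directly.

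First, by Lemma~\ref{lem2}, for $a$ large enough (depending on $\hat{p}$, $R$, $R'$) any point of $\M(p)$ of modulus $r \in [R, R']$ must be real. Hence for each such $r$,
\[
\M(p) \cap \{|z|=r\} \subseteq \{r, -r\}.
\]
Since both $r$ and $-r$ lie in the plane, one or both of them realise the maximum of $|p|$ on the circle of radius $r$, and it is this choice we have to identify.

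Next I would compute $p(\pm r)$ explicitly. Because $\hat{p}$ is odd with real coefficients, $\hat{p}(-r) = -\hat{p}(r) \in \R$, and because $z \mapsto a(z^2+1)$ is even and real on $\R$, we get
\[
p(r) = a(r^2+1) + \hat{p}(r), \qquad p(-r) = a(r^2+1) - \hat{p}(r),
\]
both real. Since $\hat{p}$ is bounded on $[R,R']$, by further enlarging $a$ if necessary I can arrange $a(r^2+1) > |\hat{p}(r)|$ for every $r \in [R,R']$, so that $p(\pm r) > 0$ on this range. Then
\[
|p(r)| - |p(-r)| = 2\hat{p}(r),
\]
from which the three cases fall out immediately: $|p(r)| > |p(-r)|$ when $\hat p(r)>0$, the reverse inequality when $\hat p(r)<0$, and equality when $\hat p(r)=0$.

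Combining the two steps yields exactly the trichotomy in the statement. There is essentially no obstacle here beyond Lemma~\ref{lem2} itself; the only mild point is ensuring that the threshold on $a$ can be chosen uniformly for $r \in [R,R']$, but this is immediate because $\hat p$ is bounded on the compact interval $[R,R']$ and the condition from Lemma~\ref{lem2} already depends only on $R$, $R'$ and $\hat p$.
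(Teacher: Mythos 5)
Your proposal is correct and follows essentially the same route as the paper: invoke Lemma~\ref{lem2} to confine $\M(p)$ on each circle $|z|=r$, $r\in[R,R']$, to the real points $\pm r$, then use oddness of $\hat{p}$ to write $p(\pm r) = a(r^2+1) \pm \hat{p}(r)$ and compare. Your extra remark that $a$ can be enlarged so that $p(\pm r)>0$ (making the comparison of moduli immediate) is a harmless elaboration of the step the paper dismisses with ``the result then follows easily.''
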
 
\begin{proof}
We first choose $a > 0$ large enough that the consequence of Lemma~\ref{lem2} holds. Since $\hat{p}$ is odd, we have that
\[
p(\pm r) = a(r^2 + 1) \pm \hat{p}(r).
\]
The result then follows easily.
\end{proof}

We can now prove our two main constructions.
\begin{proof}[Proof of Theorem~\ref{t1}]
Let $a_1, a_2, \ldots, a_n$ be distinct positive real numbers as in the statement of the Theorem. Let $\hat{p}$ be the odd polynomial
\[
\hat{p}(z) \defeq z(z^2 - a_1^2)(z^2 - a_2^2) \ldots (z^2 - a_n^2),
\]
and set 
\begin{equation}
\label{eq:Rdef}
R \defeq \frac{1}{2}\min \{ a_1, a_2, \ldots, a_n \} \text{ and } R' \defeq 2 \max \{ a_1, a_2, \ldots, a_n \}.
\end{equation}
Note that $\hat{p}(r)$ changes sign, as $r$ increases from zero, every time we pass through one of the $a_k$. Let $a > 0$, and let $p$ be the polynomial in \eqref{eq:pdef}. The result then follows from the comment above, together with Lemma~\ref{lem3}.
\end{proof}
\begin{proof}[Proof of Theorem~\ref{t2}]
Let $a_1, a_2, \ldots, a_n$ be distinct positive real numbers as in the statement of the Theorem. Let $\hat{p}$ be the odd polynomial
\[
\hat{p}(z) \defeq -z(z^2 - a_1^2)^2(z^2 - a_2^2)^2 \ldots (z^2 - a_n^2)^2,
\]
and set $R$ and $R'$ as in \eqref{eq:Rdef}. Note that if $r > 0$, then $\hat{p}(r)$ is strictly negative, except when $r = a_k$, for some $k$, in which case $\hat{p}(r) = 0$. Let $a > 0$, and let $p$ be the polynomial in \eqref{eq:pdef}. The result then follows from the comment above, together with Lemma~\ref{lem3}.
\end{proof}
%
%%%
%
%%%
%
\section{Proof of Theorem \ref{t3}}
In this section, we make use of the pioneering work of Blumenthal on $\M(f)$ for any entire map $f$. The results that we require are summarized in the following theorem.  
\begin{theorem}[\cite{Blumenthal}]\label{thm_maxcurves} Let $f$ be an entire function, and let $S\subset \C$ be a compact set. Then $\M(f) \cap S$ is either empty, or consists of a finite number of closed curves, analytic except at their endpoints, and which can intersect in at most finitely many points. %Moreover, the intersection of their union with any circle  at most finitely many points.
\end{theorem}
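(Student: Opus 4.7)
My plan is to reduce Theorem~\ref{t3} to two applications of Theorem~\ref{thm_maxcurves}, one on a large closed disk centred at the origin and one on a small closed disk, linked by the inversion $z \mapsto 1/z$. If $p$ is a monomial then $\M(p) = \C$ is connected and gives no discontinuities, so I assume $p(z) = \sum_{k=0}^d c_k z^k$ is non-monomial with $d \geq 1$ and $c_d \neq 0$. I introduce the companion polynomial
\[
q(w) \defeq w^d\, p(1/w) = \sum_{k=0}^{d} c_k\, w^{d-k},
\]
which satisfies $q(0) = c_d \neq 0$. From the identity $|p(z)| = |z|^d\,|q(1/z)|$ for $z \neq 0$, it follows that the inversion $z \mapsto 1/z$ sends $\M(p) \cap \{|z| = r\}$ bijectively onto $\M(q) \cap \{|w| = 1/r\}$ for every $r > 0$, and hence restricts to a homeomorphism from $\M(p) \cap \{|z| \geq R_0\}$ onto $\M(q) \cap \{0 < |w| \leq 1/R_0\}$ for any $R_0 > 0$.

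Fix such an $R_0 > 0$ and apply Theorem~\ref{thm_maxcurves} twice. Applied to $p$ on the compact set $\overline{D(0, R_0)}$, it yields that $\M(p) \cap \overline{D(0, R_0)}$ is a finite union of closed analytic arcs, so in particular has finitely many connected components. Applied to $q$ on $\overline{D(0, 1/R_0)}$, it yields the same statement for $\M(q) \cap \overline{D(0, 1/R_0)}$; removing the single point $\{0\}$ can split each of its finitely many arcs into at most finitely many subpieces, so $\M(q) \cap \overline{D(0, 1/R_0)} \setminus \{0\}$ still has finitely many connected components. Via the homeomorphism above, $\M(p) \cap \{|z| \geq R_0\}$ also has finitely many components.

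Finally, $\M(p)$ is the union of the two closed sets $\M(p) \cap \overline{D(0, R_0)}$ and $\M(p) \cap \{|z| \geq R_0\}$, which intersect only in the finite set $\M(p) \cap \{|z| = R_0\}$. Since each component of $\M(p)$ contains at least one component of one of these two pieces (indeed, any connected subset of $\M(p)$ meeting a piece is contained in one of its components), and distinct components of $\M(p)$ contain disjoint such pieces, the total number of components of $\M(p)$ is bounded above by the sum of the two finite counts. Each discontinuity of $\M(p)$ corresponds to a distinct connected component (namely one whose minimum modulus is strictly positive), so there are only finitely many. The one point that needs careful verification is the identity $|p(z)| = |z|^d|q(1/z)|$ and the resulting bijection of maximum modulus sets on corresponding circles under inversion; beyond that, the argument is a short gluing of the two disk applications and a combinatorial bound on components.
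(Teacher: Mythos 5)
Your proposal does not prove the statement in question. The statement is Theorem~\ref{thm_maxcurves} itself --- Blumenthal's structural result that $\M(f)\cap S$ is a finite union of closed curves, analytic except at their endpoints, intersecting in at most finitely many points. What you have written is instead a proof of Theorem~\ref{t3}, and it takes Theorem~\ref{thm_maxcurves} as given: both of your ``two applications'' invoke precisely the result you were asked to establish. As an answer to the task this is circular, even though the argument you give is a perfectly reasonable route to Theorem~\ref{t3} --- indeed it is essentially the route the paper itself takes, splitting $\C$ into $\overline{\D}$ and its complement and passing to the reciprocal polynomial (compare Corollary~\ref{cor_bounded_dis} and Proposition~\ref{prop_reciprocal}).

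A genuine proof of Theorem~\ref{thm_maxcurves} has to engage with the local structure of the maximum modulus set: one studies the real-analytic function $(r,\theta)\mapsto |f(re^{i\theta})|^2$, shows that the points where the map $\theta\mapsto |f(re^{i\theta})|$ attains a local maximum form, locally, a finite collection of analytic arcs, and then uses compactness of $S$ to obtain the global finiteness assertions. The paper does not reprove this; it cites \cite{Blumenthal} and points the reader to \cite[II.3]{valironlectures} for complete proofs, as noted in the remark following the theorem. Any blind proof attempt would need to supply an argument of that kind rather than assume the conclusion and derive a downstream corollary from it.
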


\begin{remark}\normalfont
Proofs of the results in \cite{Blumenthal} can also be found in \cite[II.3]{valironlectures}. We note that they are based on the study of the set of points where local maxima of the map $\theta \mapsto |f(re^{i \theta})|$ occur. The local structure of these points consists of a (finite) collection of analytic arcs. See also the work of Hayman \cite{Hayman_origin}.
\end{remark} 

\begin{cor} \label{cor_bounded_dis} Let $f$ be an entire function, and let $S\subset \C$ be any compact set. Then $\M(f) \cap S$ has at most finitely many discontinuities. %In particular, $\M(f)$ has at most countably many discontinuities.
\end{cor}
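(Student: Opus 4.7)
The plan is to deduce the corollary directly from Theorem~\ref{thm_maxcurves} by exploiting the correspondence between discontinuities of $\M(f)$ and certain connected components of $\M(f)$. By the definition given in the introduction, a discontinuity of modulus $r$ is witnessed by a connected component $\Gamma$ of $\M(f)$ with $\min\{|z|:z\in\Gamma\}=r$, and since a given component has a single minimum-modulus value, distinct discontinuities necessarily correspond to distinct components of $\M(f)$.

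Next I would apply Theorem~\ref{thm_maxcurves} to $\M(f)\cap S$, obtaining that this intersection is empty or consists of a finite collection of closed analytic curves $\gamma_1,\dots,\gamma_N$. Each $\gamma_i$ is connected, so it is contained in a single connected component of $\M(f)$; consequently, the number of distinct connected components of $\M(f)$ that meet $S$ is at most $N$, and in particular finite.

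To conclude, I would interpret ``discontinuities of $\M(f)\cap S$'' as those discontinuities of $\M(f)$ whose witnessing component intersects $S$ (equivalently, whose minimum-modulus-realizing point lies in $S$). By the first paragraph, each such discontinuity is associated with one of the at most $N$ components of $\M(f)$ meeting $S$, so there are at most $N$ of them. The main obstacle is really only terminological: once the bijection between discontinuities and components is pinned down, the corollary is essentially a one-line bookkeeping consequence of Theorem~\ref{thm_maxcurves}, with no further analytic content required.
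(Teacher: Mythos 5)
Your proof is correct and follows essentially the same route as the paper: identify discontinuities with connected components of $\M(f)$ (so that they can be counted), and then invoke Theorem~\ref{thm_maxcurves} to bound the number of relevant components meeting $S$. Your intermediate step passing from components of $\M(f)\cap S$ to components of $\M(f)$ that meet $S$ is a slightly more careful piece of bookkeeping than the paper's one-line version, but the underlying argument is the same.
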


\begin{proof} Since $\M(f)$ is a collection of closed curves, by definition of discontinuity, there is a bijection from the set of discontinuities of $\M(f)$ %
%\[\left\{r>0 \colon \M(f) \text{ has a discontinuity  at modulus } r\right\},  \]
%\NEW 
to the set of all connected components of $\M(f)$ that do not contain the point zero. By this, and since by Theorem \ref{thm_maxcurves} %for any compact set $S$, 
$\M(f) \cap S$ has finitely many components, the result follows. %The second statement can be seen applying the first part to a partition of the plane into countably many annuli together with a disk.
\end{proof}

\begin{remark}\normalfont
Note that it follows easily from Corollary~\ref{cor_bounded_dis} that the maximum modulus set of a {\tef} can have at most countably many discontinuities.
\end{remark}
 
%In the following, for any polynomial $p$, we denote \[\M^\ast(p)\defeq \M(p)\setminus \{0\}.\]
\begin{proposition}\label{prop_reciprocal}
Suppose that $p$ is a polynomial of degree $n$, and define its \emph{reciprocal polynomial}, $q$, by $q(z) \defeq z^n p(1/z)$. Then $w \in \M(q)\setminus\{0\}$ if and only if $1/w \in \M(p)\setminus\{0\}$.
\end{proposition}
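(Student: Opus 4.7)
The plan is to reduce the statement to a single chain of biconditionals, using the identity defining $q$ together with the observation that inversion $z \mapsto 1/z$ is a bijection between circles centred at the origin of reciprocal radii.

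First, I would record the pointwise identity: for any $z \in \C$ with $|z| = r > 0$, the definition $q(z) \defeq z^n p(1/z)$ gives
\begin{equation*}
|q(z)| = r^n \, |p(1/z)|.
\end{equation*}
Since $z \mapsto 1/z$ is a bijection of the circle $\{|z|=r\}$ onto the circle $\{|z|=1/r\}$, taking the maximum of both sides over $|z|=r$ yields
\begin{equation*}
M(r, q) = r^n \, M(1/r, p) \qfor r > 0.
\end{equation*}

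Second, I would combine the two displays. For any $w \neq 0$ with $|w| = r$, the membership $w \in \M(q)$ amounts to $|q(w)| = M(r, q)$, which by the two displays is successively equivalent to $r^n |p(1/w)| = r^n M(1/r, p)$ and hence to $|p(1/w)| = M(|1/w|, p)$, i.e., to $1/w \in \M(p)$. Since both sides exclude the origin, this is exactly the claimed biconditional.

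I anticipate no genuine obstacle here; the argument is routine once the identity $M(r,q) = r^n M(1/r,p)$ is in hand. The only minor subtlety worth mentioning is that $q$ need not have degree exactly $n$ (its degree drops if $p(0)=0$), but the proof uses only the polynomial identity $q(z) = z^n p(1/z)$ on $\C\setminus\{0\}$ and does not rely on the degree of $q$.
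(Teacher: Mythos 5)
Your proof is correct and follows essentially the same route as the paper: both rest on the identity $|q(z)| = |z|^n|p(1/z)|$ together with the fact that $z \mapsto 1/z$ maps circles about the origin to circles of reciprocal radius. Your explicit chain of equivalences even sidesteps the paper's appeal to the reciprocal of the reciprocal being $p$ (and with it the small degree subtlety you note), so nothing is missing.
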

\begin{proof}
Since the reciprocal of the reciprocal of a polynomial equals the original polynomial, it suffices to prove one direction. Suppose that $w \in \M(q)\setminus\{0\}$. Then % and let $r \defeq |w|$. Then
\[
|q(w)| = \max_{|z| = |w|} |q(z)|,
\]
and so
%\[
%r^n |p(1/w)| = \max_{|z| = r} r^n |p(1/z)|,
%\]
%which means that
\[
|p(1/w)| = \max_{|z| = |w|} |p(1/z)|.
\]
Hence $1/w \in \M(p)\setminus\{0\}$, as required.
\end{proof}

\begin{proof}[Proof of Theorem \ref{t3}] Let $p$ be a polynomial of degree $n$ and let $q$ be its reciprocal polynomial. Denote by $\overline{\D}$ the closure of the unit disk centred at the origin. Then, by Corollary \ref{cor_bounded_dis}, both $\M(p) \cap \overline{\D}$ and $\M(q) \cap \overline{\D}$ have at most finitely many discontinuities. Thus, by Proposition \ref{prop_reciprocal}, $\M(p) \setminus \D$ also has at most finitely many discontinuities, and the result follows. 
\end{proof}

%\begin{remark}\normalfont In order to prove Theorem \ref{t3}, we could had used \cite[Theorem I]{Hayman_origin}, where a characterization of $\M(p)$ near the origin is provided.
%\end{remark}

%%%
%
%%%
%
\bibliographystyle{alpha}
\bibliography{MaxModPolyReferences}
\end{document}